\documentclass[11pt]{amsart}
\input{amssym.def}
\input{amssym}
\setlength{\parindent}{0pt}
\setlength{\oddsidemargin}{0pt}
\setlength{\evensidemargin}{0pt}
\setlength{\topmargin}{0pt}
\setlength{\headheight}{0pt}
\setlength{\textwidth}{15.8cm}
\setlength{\textheight}{23cm}

\newtheorem{pro}{Proposition}[section]
\newtheorem{lem}[pro]{Lemma}

\newtheorem{theo}[pro]{Theorem}
\newtheorem{defi}[pro]{Definition}
\newtheorem{cor}[pro]{Corollary}
\newtheorem{remk}[pro]{Remark}

\newcommand{\ep}{\varepsilon}
\newcommand{\al}{\alpha}
\newcommand{\om}{\omega}
\newcommand{\vp}{\varphi}
\newcommand{\la}{\lambda}

\newcommand{\sun}{\odot}

\newcommand{\lra}{\longrightarrow}
\newcommand{\lmt}{\longmapsto}


\newcommand{\nrm}[1]{\mbox{ $ \displaystyle \left\| {#1} \right\| $} }
\newcommand{\nri}[1]{\mbox{ $ \nrm{ {#1} }_{\infty} $} }

\newcommand{\fk}[1]{ \left( {#1} \right) }

\newcommand{\bk}[1]{ \left\{ {#1} \right\} }
\newcommand{\btr}[1]{\mbox{ $ \left| {#1} \right| $ }}


\newcommand{\ce}{{\bf\Bbb C}}

\newcommand{\re}{{\bf\Bbb R}}
\newcommand{\rep}{{\bf\Bbb R^+}}
\newcommand{\za}{{\bf\Bbb N}}

\newcommand{\jz}{{\bf\Bbb J}}



\newcommand{\semig}[2]{\bk{{#1}(t)}_{t\in {#2} }}

\newcommand{\seq}[2]{\mbox{$ \bk{ #1_{#2} }_{{#2} \in \za} $} }

\newcommand{\ilm}[1]{  \lim_{ {#1} \to \infty}  }

\newcommand{\Funk}[5]{ \begin{array}{ccccc}
                       {#1} & : & {#2} & \lra & {#3} \\
                            &   & {#4} & \lmt & \displaystyle{#5} 
                       \end{array}                       }

\begin{document}

\title{Short notes on $L^1(\Omega,X)$ with infinite measure}
\author{Josef Kreulich, \\ Universit\"at Duisburg-Essen}
\keywords{$\sigma-$finite measures, duality, weak compactness, dual semigroups}
\begin{abstract}
This study uses the ideas of \cite{Rieffel} to provide the dual of $L^1(\mu,X)$ in the positive and $\sigma-$ finite cases. This results in elegant necessary and sufficient criteria for weak compactness in $L^1(S,\mu,X)$ in the $\sigma-$finite case, using the ideas of \cite{RuessL1} and \cite{Cooper}. Finally, the result of \cite{NeervenLNM} is extended to compute the sun-dual of $L^1(\re,X)$ with respect to the canonical translation semigroup, dropping the approximation property from $X^*,$, which is applied to obtain almost periodicity for integrals of non-smooth functions. Moreover, for evolution semigroups, it is shown that weak compactness of the orbits implies strong stability.
\end{abstract}

\maketitle

\section{Introduction}
The subject of this study is to extend the results from finite to
infinite measure cases and provide some applications to
$C_0$-semigroups. To obtain the results, we use a representation of
the linear functionals on $L^1(S,X).$ From \cite{CembranoMendoza}, we
know that there are isometries, but the translation of the conditions
does not appear obvious. For this, the duality, weak compactness and an application to $C_0-$semigroup theory are given.

To obtain relatively weak compactness, previously, the conditions in
\cite{DincBrooks} and \cite{Cooper} are provided, which are either not
necessary or aim at a different topology. The proof in this study
follows in the sufficiency part in the idea of \cite{RuessL1} to omit
condition (2) of \cite[Thm. 1]{DincBrooks} and replace it with the
condition on $co\bk{f_n:n\ge k},$, which was used by
\cite{RuessL1}. In the necessity part, an idea given in \cite{Cooper}
is applied. The book \cite{Cooper} is restricted to local compactness,
and the dual Banach space was assumed to have the RNP. Further
relative $\sigma(L^1(\Omega,X^*),L^{\infty}(\Omega,X))-$compactness
was considered. This is surely weaker than the relative
$\sigma(L^1(\Omega,X^*),L^{\infty}(\Omega,X^{**}))-$compactness if the
underlying range space is a dual. Additionally, it is well known that in the general Banach space case, it is not sufficient to consider the functionals coming with $ L^{\infty}(\Omega,X^{**}).$
Moreover, we restrict to one arbitrary chosen coverage $\bigcup_{n=1}^{\infty}A_n=\Omega.$ Under these prerequisites, we will provide necessary and sufficient criteria on relatively weak compactness for general Banach spaces $X,$ and infinite but $\sigma-$finite and positive measure spaces $(\Omega,\Sigma,\mu). $ The results above are applied with the use of sun-dual semigroups. Recall that for a $C_0-$semigroup,
$$
X^{\sun}=\bk{x^*\in X^*:\lim_{h\to 0}T^*(h)x^*=x^*}.
$$
The translation semigroup on the bounded uniformly continuous functions is a well-known $C_0-$semigroup. Using the representation of the linear functionals on $L^1(\re,X),$, it was shown that in general, $BUC(\re,X^*)=L^1(\re,X)^{\sun}.$
This was previously proven under the assumption of $X^*$ having the
approximation property \cite[Ass. 7.3.3,
pp. 131-132]{NeervenLNM}. Additionally, an application of the weak
compactness result gives the equality of weak and strong stability for
so-called evolution semigroups, compare \cite{ChiconeLatushkin}.

\section{Representations of linear functionals and consequences}
Throughout this study, $\mu:\Sigma\to \re$ is assumed to be a positive and $\sigma-$finite measure.

\begin{defi}
Let $X$ be a Banach space and $\Gamma\subset X^*$ and $(\Omega,\Sigma,\mu)$ be a measure space. Then, $u:\Omega\to X$ is called $\Gamma-$measurable if $x^*(u(\cdot)):\Omega \to \ce$ is $\mu-$measurable. In the case of $X=Y^*$ and $\Gamma=X,$, $X-$measurability is called $w^*-$measurability.
\end{defi}
We cite the general result for $p\in[1,\infty)$, but we provide a proof for $p=1$ using \cite[Thm. 5.1]{Rieffel}. This will provide the main ideas and is sufficient for usage in this study.

\begin{theo}\cite[Thm 1.5.4, p. 23]{CembranoMendoza} \label{L-1-X-dual}
Let $X$ be a Banach space and $\Omega$ be a finite measure
space. $(1\le p< \infty),$ and $q$ is the conjugate of $p$. Then, for each $\vp\in L^p(\Omega,\Sigma,\mu,X)^*$,
we have for some $g:\Omega \to X^*$ with
\begin{enumerate}
\item $g$ is $w^*-$measurable.
\item The function $\bk{ \om \to \nrm{g(\om}}\in L^q(\mu) $.
\item $$\vp(f)=\int_{\Omega}<g(\om),f(\om)>_{X^*,X}d\mu(\om)$$.
\item $ \nrm{\vp}=\nrm{\nrm{g(\cdot)}}_q.$
\end{enumerate}
\end{theo}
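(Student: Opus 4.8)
The statement is quoted for general $p$ from \cite{CembranoMendoza}; following the excerpt I will only sketch the case $p=1$ (so $q=\infty$), which carries the main ideas and suffices for the applications. First I would manufacture the candidate density as a vector measure: given $\varphi\in L^1(\Omega,\Sigma,\mu,X)^*$, define $G(A)\in X^*$ for $A\in\Sigma$ by $\langle G(A),x\rangle:=\varphi(\chi_A\,x)$, $x\in X$. This is well defined (simple functions span a dense subspace of $L^1(\mu,X)$) and linear in $x$, and $\|\chi_A x\|_{L^1}=\mu(A)\|x\|$ gives $\|G(A)\|\le\|\varphi\|\,\mu(A)$. A routine estimate over finite measurable partitions of a set $A$ — choosing for each piece an almost-norming $x_i$ with $\|x_i\|\le1$, a unimodular $\la_i$ aligning $\la_i\langle G(A_i),x_i\rangle$ with $|\langle G(A_i),x_i\rangle|$, and applying $\varphi$ to $\sum_i\la_i\chi_{A_i}x_i$ — shows $G$ has bounded variation with $|G|(A)\le\|\varphi\|\,\mu(A)$; in particular $G$ is $\mu$-continuous and each $A\mapsto\langle G(A),x\rangle$ is countably additive.

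Next I would differentiate $G$. All averages $G(A)/\mu(A)$ (with $\mu(A)>0$) lie in the ball of radius $\|\varphi\|$ of $X^*$, which is weak$^*$-compact; feeding the $\mu$-continuous, bounded-variation measure $G$ into the averaging construction of \cite[Thm. 5.1]{Rieffel} (approximating $G$ by conditional expectations over an increasing net of finite $\Sigma$-partitions and extracting weak$^*$-cluster points of the resulting $X^*$-valued simple functions) should produce a $w^*$-measurable $g:\Omega\to X^*$ with $\|g(\omega)\|\le\|\varphi\|$ for $\mu$-a.e.\ $\omega$ and $\langle G(A),x\rangle=\int_A\langle g(\omega),x\rangle\,d\mu(\omega)$ for all $A\in\Sigma$, $x\in X$. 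Evaluating on a simple $f=\sum_{i=1}^{k}\chi_{A_i}x_i$ then gives $\varphi(f)=\int_\Omega\langle g(\omega),f(\omega)\rangle\,d\mu(\omega)$.

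Finally I would pass to the limit and identify the norm. Since $\bigl|\int_\Omega\langle g,f\rangle\,d\mu\bigr|\le\bigl(\operatorname*{ess\,sup}_\omega\|g(\omega)\|\bigr)\|f\|_{L^1}$ and simple functions are dense in $L^1(\mu,X)$, both sides of (3) are $\|\cdot\|_{L^1}$-continuous and agree on a dense set, hence everywhere; the same inequality gives $\|\varphi\|\le\operatorname*{ess\,sup}\|g(\cdot)\|$. For the converse, on a set $A$ with $\mu(A)>0$ on which $\|g\|\ge c$ I would build test functions supported on $A$ from (approximately) norming directions for $g(\omega)$ and get $\varphi(f)\ge(c-\ep)\|f\|_{L^1}$, whence $\operatorname*{ess\,sup}\|g(\cdot)\|\le\|\varphi\|$; together with measurability of $\omega\mapsto\|g(\omega)\|$ this yields (2) and (4).

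The delicate point is the differentiation step: $X^*$ need not have the Radon--Nikod\'ym property, so no Bochner density can be expected, and the role of \cite[Thm. 5.1]{Rieffel} is precisely to extract a genuinely $w^*$-measurable density by exploiting weak$^*$-compactness of balls of $X^*$ in place of norm convergence. A secondary technical issue, implicit in the last step, is the measurable selection of near-norming functionals and the measurability of $\omega\mapsto\|g(\omega)\|$: this is immediate when $X$ is separable (then $\|g(\omega)\|=\sup_n\langle g(\omega),x_n\rangle$ along a norm-dense sequence $(x_n)$), and otherwise needs the additional care carried out in \cite{CembranoMendoza}.
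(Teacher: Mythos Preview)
Your proposal is correct and follows essentially the same route as the paper: define the $X^*$-valued set function $G(A)(x)=\varphi(\chi_A x)$, check bounded variation and that the averages $G(A)/\mu(A)$ lie in $\|\varphi\|B_{X^*}$, invoke \cite[Thm.~5.1]{Rieffel} with $V=(X^*,w^*)$ to obtain the $w^*$-density $g$, verify the representation on simple functions, and pass to the limit. Your discussion of the norm identity and of the measurability of $\omega\mapsto\|g(\omega)\|$ is somewhat more explicit than the paper's, but the argument is the same.
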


Using some standard arguments from topology and functional analysis,
we obtain the following.

\begin{cor} \label{sigma-finite-duality}
Let $X$ be a Banach space and $(\Omega,\Sigma,\mu)$ be a $\sigma-$finite measure space. Then, for each $\vp\in L^1(\Omega,\Sigma,\mu,X)^*,$
for some $g:\Omega \to X^*$, we have
\begin{enumerate}
\item $g$ is $w^*-$measurable.
\item The function $\bk{ \om \to \nrm{g(\om}}\in L^{\infty}(\mu) $.
\item $$\vp(f)=\int_{\Omega}<g(\om),f(\om)>_{X^*,X}d\mu(\om)$$.
\item $ \nrm{\vp}=\nrm{\nrm{g(\cdot)}}_{\infty}.$
\end{enumerate}
\end{cor}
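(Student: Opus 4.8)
The plan is to break $\Omega$ into countably many pieces of finite measure, invoke Theorem~\ref{L-1-X-dual} on each piece, and paste the representatives together. A useful point is to use a \emph{disjoint} decomposition, which sidesteps any question of uniqueness of the representatives — something that is genuinely delicate when $X$ is not separable. Concretely, write $\Omega=\bigcup_{n\in\za}A_n$ with $\mu(A_n)<\infty$ (possible by $\sigma$-finiteness) and set $B_1=A_1$, $B_n=A_n\setminus\bigcup_{k<n}A_k$, so that $\Omega=\bigsqcup_{n\in\za}B_n$ with $\mu(B_n)<\infty$. Given $\vp\in L^1(\Omega,\Sigma,\mu,X)^*$, the formula $\vp_n(f)=\vp(1_{B_n}f)$ depends only on $f|_{B_n}$ and hence defines an element of $L^1(B_n,\mu,X)^*$ with $\|\vp_n\|\le\|\vp\|$. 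Applying Theorem~\ref{L-1-X-dual} with $p=1$, $q=\infty$ on the finite measure space $B_n$ produces a $w^*$-measurable $g_n\colon B_n\to X^*$ with $\om\mapsto\|g_n(\om)\|$ in $L^\infty(B_n,\mu)$, $\bigl\|\,\|g_n(\cdot)\|\,\bigr\|_\infty=\|\vp_n\|\le\|\vp\|$, and $\vp_n(f)=\int_{B_n}\langle g_n(\om),f(\om)\rangle_{X^*,X}\,d\mu(\om)$ for every $f\in L^1(\Omega,\mu,X)$. Define $g\colon\Omega\to X^*$ by $g(\om)=g_n(\om)$ for $\om\in B_n$.

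Properties (1) and (2) then follow from the countability of the partition. For each $x\in X$ the function $\langle g(\cdot),x\rangle$ restricts on $B_n$ to the measurable function $\langle g_n(\cdot),x\rangle$, so it is measurable on $\Omega=\bigsqcup_n B_n$; thus $g$ is $w^*$-measurable. Likewise $\om\mapsto\|g(\om)\|$ restricts on $B_n$ to $\|g_n(\cdot)\|$, so it is measurable on $\Omega$, and $\|g(\om)\|\le\|\vp\|$ for $\mu$-a.e.\ $\om$; hence $\om\mapsto\|g(\om)\|\in L^\infty(\mu)$ with $\bigl\|\,\|g(\cdot)\|\,\bigr\|_\infty\le\|\vp\|$.

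For (3), fix $f\in L^1(\Omega,\mu,X)$. Since $\sum_n\int_{B_n}\|f\|\,d\mu=\int_\Omega\|f\|\,d\mu<\infty$, the partial sums $\sum_{n\le N}1_{B_n}f$ converge to $f$ in $L^1(\Omega,\mu,X)$, so by linearity and continuity of $\vp$, $\vp(f)=\sum_n\vp_n(f)=\sum_n\int_{B_n}\langle g_n,f\rangle\,d\mu=\sum_n\int_{B_n}\langle g,f\rangle\,d\mu$. The function $\om\mapsto\langle g(\om),f(\om)\rangle$ is measurable on $\Omega$ (again by the partition) and dominated by $\|\vp\|\,\|f(\cdot)\|\in L^1(\mu)$, so it belongs to $L^1(\mu)$ and $\sum_n\int_{B_n}\langle g,f\rangle\,d\mu=\int_\Omega\langle g,f\rangle\,d\mu$ by countable additivity of the integral; this is (3). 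Finally, (3) gives $|\vp(f)|\le\bigl\|\,\|g(\cdot)\|\,\bigr\|_\infty\|f\|_1$, hence $\|\vp\|\le\bigl\|\,\|g(\cdot)\|\,\bigr\|_\infty$, which together with the reverse inequality from (2) yields (4).

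I do not expect a serious obstacle; the whole argument is the finite-measure theorem plus the elementary fact that a property holding on each member of a countable partition holds on the union. The one place calling for care is the choice of decomposition: with an increasing exhaustion $\Omega_n\uparrow\Omega$ the pasting step would require $g_m=g_n$ $\mu$-a.e.\ on $\Omega_n$ for $m\ge n$, i.e.\ a.e.\ uniqueness of the representative on each finite piece, and this can fail for non-separable $X$ (a nonzero $w^*$-measurable $X^*$-valued function may induce the zero functional on $L^1$). Using disjoint pieces removes this issue entirely.
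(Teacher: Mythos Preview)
Your proof is correct and follows essentially the same route as the paper: decompose $\Omega$ into the disjoint finite-measure pieces $A_n\setminus A_{n-1}$, apply the finite-measure result on each piece, paste the representatives, and verify the representation via $f\chi_{A_n}\to f$ in $L^1$. Your explicit remark on why disjoint pieces are preferable to an increasing exhaustion (avoiding the a.e.\ uniqueness issue for non-separable $X$) is a nice clarification that the paper leaves implicit.
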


Next, we want to consider weak compactness in this measure space, which leads to the following definitions.

\begin{defi}
Let $X$ be a Banach space and $(\Omega, \Sigma,\mu)$ be a finite measure space. Then, $H\subset L^1(\mu,X)$ is called uniformly integrable if
$$
\lim_{\mu(E)\to 0}\int_E\nrm{f}d\mu=0 \mbox{ uniformly for } f\in H.
$$
\end{defi}

Additionally, we generalize equi-integrability, which is considered in the scalar-valued case \cite[IV.15.54, p. 547]{DS} and in the infinite-dimensional case \cite{Cooper}.
\begin{defi} \label{A-equi-integrable}
Let $H\subset L^1(\Omega,X).$
\begin{enumerate}
\item Then, $H$ is $\mathcal{A}-$equi-integrable if there exists $\sigma-$finite coverage $\mathcal{A}=\seq{A}{n},$ with $A_n\subset A_{n+1},$ $ \Omega=\bigcup_{k=1}^{\infty}A_k,$ such that
\begin{enumerate}
\item $H$ is uniformly integrable,
\item there exist $\seq{a}{n}\in l^1(\za)^+,$ such that for all $f\in
  H,$ and
$$\int_{A_{n} \backslash A_{n-1}}\nrm{f}d\mu \le a_n.$$
\end{enumerate}
\item Then, $H$ is $\Omega-$equi-integrable if for every $\sigma-$finite coverage $\mathcal{A}=\seq{A}{n},$ with $A_n\subset A_{n+1},$ $ \Omega=\bigcup_{k=1}^{\infty}A_k,$ we have
\begin{enumerate}
\item $H$ is uniformly integrable,
\item there exist $\seq{a}{n}\in l^1(\za)^+,$ such that for all $f\in
  H,$ and
$$\int_{A_{n} \backslash A_{n-1}}\nrm{f}d\mu \le a_n.$$
\end{enumerate}
\end{enumerate}
\end{defi}

\begin{remk} \label{A-equi-integrable-equiv}
Let $\mathcal{A}=\seq{A}{n},$ $\sigma-$finite coverage with $A_n\subset A_{n+1},$ $ \Omega=\bigcup_{k=1}^{\infty}A_k.$ Then,
$$
\sum_{n=k}^{\infty}\int_{A_{n} \backslash A_{n-1}}\nrm{f}d\mu =\int_{\Omega\backslash A_k}\nrm{f}d\mu.
$$
\end{remk}

Following the proof of \cite[Thm 2.1]{RuessL1} and \cite{Cooper} obtains the elegant result for relatively weak compactness in $L^1(\Omega,\Sigma,\mu,X).$
\begin{theo} \label{sigma-finite-weak-compactness}
Let $\mu$ be a positive Borel measure on a $\sigma-$finite measure space and $H$ be a bounded subset of $L^1(S,\mu,X).$ Then, the following are equivalent:
\begin{enumerate}
\item H is weakly relatively compact.
\item H is $\mathcal{A}-$equi-integrable, and given any sequence $\seq{f}{n}\subset H,$, there exists a sequence $\seq{g}{n}$ with $g_n\in co\bk{f_k:k\ge n}$ such that $\bk{g_n(\omega)}_{n\in \za}$ is norm convergent in $X$ for a.e. $\om\in\Omega$
\item  H is $\mathcal{A}-$equi-integrable, and given any sequence $\seq{f}{n}\subset H,$, there exists a sequence $\seq{g}{n}$ with $g_n\ \in co\bk{f_k:k\ge n}$ such that $\bk{g_n(\omega)}_{n\in \za}$ is weakly convergent in $X$ for a.e. $\om\in\Omega$
\end{enumerate}
\end{theo}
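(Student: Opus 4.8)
The plan is to prove the cycle $(1)\Rightarrow(2)\Rightarrow(3)\Rightarrow(1)$, of which $(2)\Rightarrow(3)$ is immediate since norm convergence in $X$ implies weak convergence. Two ingredients do the work. First, by Corollary \ref{sigma-finite-duality}, $g_n\rhp g$ in $L^1(\Omega,X)$ holds exactly when $\int_\Omega\langle h,g_n\rangle\,d\mu\to\int_\Omega\langle h,g\rangle\,d\mu$ for every $w^*$-measurable, essentially bounded $h:\Omega\to X^*$. Second, for an exhaustion $\mathcal{A}=\seq{A}{k}$ as in Definition \ref{A-equi-integrable}, Remark \ref{A-equi-integrable-equiv} turns the $\ell^1$-annulus bound into $\int_{\Omega\backslash A_k}\nrm{f}d\mu\le\sum_{n>k}a_n$ uniformly for $f\in H$, and the same bound holds for $f\in co\, H$ because uniform integrability and the annulus bounds are convex conditions. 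The recurring device will be to split $\int_\Omega=\int_{A_m}+\int_{\Omega\backslash A_m}$, handle the finite-measure part $A_m$ with Vitali's convergence theorem, and render the tail uniformly negligible by enlarging $m$; this is how the finite-measure arguments of \cite{RuessL1} and \cite{Cooper} are carried over to the $\sigma$-finite case.

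For $(3)\Rightarrow(1)$: given $\seq{f}{n}\subset H$, condition (3) provides $g_n\in co\bk{f_k:k\ge n}$ with $g_n(\om)\rhp f(\om)$ in $X$ for a.e.\ $\om$; Pettis's measurability theorem and Fatou's lemma (using $\nrm{f(\om)}\le\liminf_n\nrm{g_n(\om)}$ a.e.\ and boundedness of $H$) give $f\in L^1(\Omega,X)$. I then show $g_n\rhp f$ in $L^1(\Omega,X)$: for a fixed $w^*$-measurable, essentially bounded $h:\Omega\to X^*$ the functions $\om\mapsto\langle h(\om),g_n(\om)\rangle$ converge a.e.\ to $\langle h(\cdot),f(\cdot)\rangle$ and, on each $A_m$, are dominated by the uniformly integrable family $\bk{\nrm{g_n(\cdot)}}$, so Vitali on $A_m$ together with the uniform tail on $\Omega\backslash A_m$ and $m\to\infty$ give $\int_\Omega\langle h,g_n\rangle\,d\mu\to\int_\Omega\langle h,f\rangle\,d\mu$. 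Thus every sequence $\seq{f}{n}$ in $H$ has a sequence $g_n\in co\bk{f_k:k\ge n}$ converging weakly in $L^1(\Omega,X)$, and this already forces $H$ to be weakly relatively compact: were it not, the Eberlein--Smulian theorem and Rosenthal's dichotomy would hand us a sequence in $H$ with either an $\ell^1$-subsequence -- whose forward convex blocks never converge weakly, by the Schur property -- or a weakly Cauchy subsequence whose bidual limit lies outside $L^1(\Omega,X)$ and is shared by all of its convex blocks, so again none converges weakly, a contradiction.

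For $(1)\Rightarrow(2)$: weak relative compactness of $H$ restricts, for each $k$, to that of $\bk{f|_{A_k}:f\in H}$ in $L^1(A_k,\mu|_{A_k},X)$, a finite measure space; so the finite-measure case of the theorem (\cite[Thm.\ 2.1]{RuessL1} and \cite{Cooper}) yields, for a prescribed $\seq{f}{n}\subset H$, a sequence $g^{(1)}_n\in co\bk{f_j:j\ge n}$ with values norm-convergent a.e.\ on $A_1$. Iterating on $A_2,A_3,\dots$ and using that a forward convex block of an a.e.\ norm-convergent sequence is again a.e.\ norm-convergent with the same limit, the diagonal $h_n:=g^{(n)}_n\in co\bk{f_j:j\ge n}$ satisfies: $\bk{h_n(\om)}$ is norm-convergent for a.e.\ $\om\in\bigcup_kA_k=\Omega$, which is the convergence clause of (2). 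The $\mathcal{A}$-equi-integrability follows the necessity argument of \cite{Cooper}: equi-absolute continuity on finite-measure sets is the finite-measure statement on each $A_k$, and an exhaustion $\mathcal{A}$ along which $\sup_{f\in H}\int_{\Omega\backslash A_k}\nrm{f}d\mu$ decays summably -- which uniform integrability of $H$ permits -- supplies an admissible $\seq{a}{n}\in l^1(\za)^+$.

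The step I expect to be genuinely delicate is this last one, producing the uniform, summable tail estimate from weak relative compactness: it is the only point where $\sigma$-finiteness truly intervenes, and, unlike equi-absolute continuity, it is not a soft restriction-to-$A_k$ argument but rests on the uniform countable additivity of the variation measures $E\mapsto\int_E\nrm{f}d\mu$, $f\in H$ -- equivalently, on a gliding-hump argument in the manner of \cite{Cooper}. With that and the finite-measure theorem granted, the remaining steps (the diagonalisation and the identification of the weak $L^1$-limit) are routine.
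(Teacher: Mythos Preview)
Your argument is correct, but it diverges from the paper in two places worth noting.

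For $(3)\Rightarrow(1)$ you and the paper agree on the core computation---construct the a.e.\ weak limit $g$, then use Vitali on $A_m$ plus the uniform $\mathcal{A}$-tail to get $g_n\rightharpoonup g$ in $L^1$---but you finish differently. The paper, following \cite{RuessL1}, invokes Grothendieck's double-limit criterion: for sequences $\seq{f}{m}\subset H$ and $\seq{\vp}{n}\subset B_{L^1(\mu,X)^*}$ with both iterated limits existing, it shows $\lim_n\lim_m\vp_n(f_m)=\vp(g)=\lim_m\lim_n\vp_n(f_m)$ where $\vp$ is a $w^*$-cluster point of $\seq{\vp}{n}$. You instead appeal to Eberlein--\v{S}mulian together with Rosenthal's $\ell^1$-dichotomy. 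Your route is sound (no forward convex block of an $\ell^1$-basic sequence can converge weakly, by Schur in its closed span; and convex blocks of a weakly Cauchy sequence share its $w^*$-limit in the bidual), but it imports a substantially heavier theorem than Grothendieck's criterion, which is all the paper needs.

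For the $\mathcal{A}$-equi-integrability in $(1)\Rightarrow(2)$ the paper supplies exactly the mechanism you leave as a sketch. It argues by contradiction: if the annulus bound fails, one finds $\seq{f}{n}\subset H$ and $\ep>0$ with $\int_{A_n\setminus A_{n-1}}\nrm{f_n}\,d\mu\ge\ep$. Using Hahn--Banach on each annulus and the duality of Theorem~\ref{L-1-X-dual}, one picks $g_k\in L^\infty_{w^*}(A_k\setminus A_{k-1},X^*)$ of norm at most $1$ realising those norms, and then the bounded linear map
\[
T:L^1(S,\mu,X)\to\ell^1(\za),\qquad Tf=\Bigl(\int_{A_k\setminus A_{k-1}}\langle g_k,f\rangle\,d\mu\Bigr)_{k\in\za}
\]
sends $H$ to a weakly relatively compact subset of $\ell^1$, which by \cite[IV.13.3]{DS} must be uniformly summable---contradicting $(Tf_n)_n\ge\ep$. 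This is precisely the ``gliding-hump/uniform countable additivity'' step you flag as delicate, made explicit; note also that your phrase ``which uniform integrability of $H$ permits'' is a slip, since uniform integrability governs small sets, not tails---it is the full weak compactness, transported through $T$, that does the work. Your diagonal argument for the a.e.\ convergence clause via the finite-measure theorem on each $A_k$ is fine and is not spelled out in the paper, which simply defers that part to \cite{RuessL1}.
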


\begin{remk}
In the previous Thm \ref{sigma-finite-weak-compactness}, $\mathcal{A}$-equi-integrability can be replaced by $\Omega-$equi-integrability. As the proof of (1) implies (3), the coverage is arbitrary.
\end{remk}

\begin{remk}
Surely, the finite measure case is a consequence when choosing $A_n=\Omega$ for all $n\in \za.$
\end{remk}

\section{Applications}
To discuss the translation semigroup, recall the following conclusion from the representation of $L^1(\re,X)^*.$
\begin{remk} \label{translation-semigroup-definition}
Let $\jz\in\bk{\rep,\re}$ and $\mu$ be Lebesgue measures such that for all $f\in BUC(\jz),$
the duality of $(L^1(\rep,X),BUC(S,X^*))$ is given by
$$
<g,f>=\int_{\jz}<g(r),f(r)>d\mu(r).
$$
For $t\in \jz$, the translation operator
$$
\Funk{T(t)}{L^{\infty}_{w^*}(\jz,X^*)}{L^{\infty}_{w^*}(\jz,X^*)}{f}{\bk{s\mapsto f(t+s)}, }
$$
is the dual operator to
$$
\Funk{V(t)}{L^1(\jz,X)}{L^1(\jz,X)}{f}{\bk{ s\mapsto \left\{
\begin{array}{rcl} 
f(s-t) &:& s\in  t+\jz \\ 
0 &:& \mbox{ otherwise}
\end{array} \right. }.}
$$
As $\semig{V}{\jz}$ is a $C_0-$semigroup, it has a generator $B,$ and $V^*(t)=T(t).$

\end{remk}

Using the representation of the linear functionals, we extend the
result of \cite{NeervenLNM} for the translation semigroup
$L^1(\re,X)^{\sun}=BUC(\re,X^*),$, which was proven under the
condition that $X^*$ possesses the a.p. 
In the following, we drop this condition.

\begin{lem} \label{dual-translation-semigroup}
Let $X$ be a Banach space $\semig{V}{\re}$ and the translation
semigroup on $L^1(\re,X).$ 
Then, $L^1(\re,X)^{\sun}=BUC(\re,X^*).$
\end{lem}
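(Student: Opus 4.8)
The plan is to identify $L^1(\re,X)^{\sun}$ by combining the functional representation from Corollary~\ref{sigma-finite-duality} with the characterization of $\sun$-elements as those $g\in L^1(\re,X)^*$ for which $t\mapsto T(t)g$ is norm-continuous at $t=0$. By Corollary~\ref{sigma-finite-duality}, every $\vp\in L^1(\re,X)^*$ is represented by some $w^*$-measurable $g:\re\to X^*$ with $\|g(\cdot)\|\in L^\infty$, and $\|\vp\|=\|\,\|g(\cdot)\|\,\|_\infty$; under this identification $T(t)$ acts by $(T(t)g)(s)=g(t+s)$ as in Remark~\ref{translation-semigroup-definition}. So I would prove the two inclusions separately.

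First, the inclusion $BUC(\re,X^*)\subset L^1(\re,X)^{\sun}$. If $g\in BUC(\re,X^*)$, then $\|T(t)g-g\|=\operatorname{ess\,sup}_{s}\|g(t+s)-g(s)\|\le \sup_{s}\|g(t+s)-g(s)\|$, which tends to $0$ as $t\to 0$ precisely because $g$ is \emph{uniformly} continuous; boundedness guarantees $g$ indeed defines an element of the dual. Hence $g\in L^1(\re,X)^{\sun}$.

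For the converse, suppose $\vp\in L^1(\re,X)^{\sun}$ with representative $g$; I must produce a representative that is bounded and uniformly continuous. Boundedness is immediate from condition (2) of Corollary~\ref{sigma-finite-duality} (after possibly modifying $g$ on a null set so that $\|g(s)\|\le\|\vp\|$ for all $s$). For (uniform) continuity, the key computation is that norm-continuity of the orbit gives, for every $\ep>0$, a $\delta>0$ such that $|t|<\delta$ implies $\operatorname{ess\,sup}_s\|g(t+s)-g(s)\|<\ep$, i.e. $\|g(t+s)-g(s)\|<\ep$ for a.e.\ $s$ and all $|t|<\delta$. The standard device is to replace $g$ by a mollified version $g_n = g * \rho_n$ (convolution against an approximate identity, interpreted in the $w^*$-sense), which is automatically continuous, satisfies $\|g_n(s)\|\le\|\vp\|$, and inherits a uniform modulus of continuity from the displayed estimate; moreover $\|T(t)g_n - g_n\|\le\operatorname{ess\,sup}_s\|g(t+s)-g(s)\|$ so $g_n$ represents an element of the $\sun$ as well, and one checks $g_n\to g$ in $L^1(\re,X)^*$-norm using that $g$ itself is a $\sun$-element (this is exactly the point where norm-continuity of the orbit, rather than mere $w^*$-continuity, is used). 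Then $g_n$ is a Cauchy sequence in $BUC(\re,X^*)$ (equipped with the sup norm), its limit $\tilde g\in BUC(\re,X^*)$ agrees with $\vp$, and $\vp$ is represented by an element of $BUC(\re,X^*)$.

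The main obstacle is the converse direction, specifically making the mollification argument rigorous when $X^*$ lacks the approximation property: one must define $g*\rho_n$ for a merely $w^*$-measurable, essentially bounded $X^*$-valued function, verify it is genuinely $X^*$-valued (not just $X^{**}$-valued) and norm-continuous, and confirm that $g_n\to g$ holds in the dual norm. The first two points follow from a Pettis/Gelfand-type integral argument together with the fact that $w^*$-measurable bounded functions are integrable against $L^1$-functions; the convergence $g_n\to g$ is where the $\sun$-hypothesis does the real work, since for a general bounded $w^*$-measurable $g$ one only gets $w^*$-type convergence, whereas the equicontinuity estimate coming from $\lim_{t\to0}\|T(t)g-g\|=0$ upgrades this to convergence in the dual norm. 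Dropping the approximation property assumption of \cite{NeervenLNM} is possible precisely because we never need to approximate the identity operator on $X^*$ by finite-rank operators — the convolution smoothing happens in the $\re$-variable only.
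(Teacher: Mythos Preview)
Your proposal is correct and follows a genuinely different route from the paper. The paper does not mollify: it first fixes $z\in X$ and observes that $\langle g(\cdot),z\rangle$ falls under the scalar case $L^1(\re)^{\sun}=BUC(\re)$, so each scalar slice has a uniformly continuous representative; it then argues directly by contradiction that the resulting $g$ is norm-continuous into $X^*$, by picking a putative point of discontinuity $t_0$, a null sequence $t_n$, and norming vectors $z_n\in B_X$ with $|\langle g(t_0+t_n)-g(t_0),z_n\rangle|\ge \nrm{g(t_0+t_n)-g(t_0)}-1/n$, and bounding this quantity above by $\nrm{T(t_n)g-g}\to 0$. Your approach replaces this pointwise selection-and-contradiction step by the systematic device $g_n=g*\rho_n$; the key estimate $\nrm{g_n-g}\le \int\rho_n(r)\nrm{T(-r)g-g}\,dr$ (obtained by pairing against $f\in L^1(\re,X)$ and Fubini) is clean and makes transparent exactly where the $\sun$-hypothesis enters, while norm-continuity of $g_n$ comes for free from shifting the translation onto $\rho_n$. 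The paper's argument is slightly more bare-handed and avoids setting up the Gelfand integral, but at the cost of the somewhat delicate passage from ``$\langle g(\cdot),z\rangle$ has a continuous representative for each $z$'' to a single well-defined pointwise $g$; your mollification route sidesteps that issue entirely by producing the $BUC$ representative as a sup-norm limit of genuinely continuous functions.
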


\begin{proof} [Proof of Cor. \ref{dual-translation-semigroup}]
By the previous theorem, we have $L^1(\re,X)^*\subset L^{\infty}_{w^*}(\re,X^*).$ Further, $z\in X^*,$ for $g\in L^1(\re,X)^{\sun}$
$$
\nrm{<g(\om+h),z>-<g(\om),z>}_{ess-\infty}=\sup_{h\in B_{L^1(\mu)}}\int_{\Omega}\btr{<g(\om+h),z>-g(\om),z>}h(\om)d\omega
$$
for every $z\in X^*.$
Hence, $<g(\cdot),z>$ fulfils the scalar-valued requirement for $L^1(\mu)^{\sun}=BUC(\re),$ \cite[Exa 1.3.9, p.8]{NeervenLNM}, which yields $<g(\cdot),z> $ uniformly continuous for all $z\in X^*$

Hence, we found $g_z\in L^{\infty}_{w^*}(\re,X)$ defined on every $t\in \re,$ with $<g_z,z>\in BUC(\re),$ and $<x,g_{z_1}(\cdot)>=<x,g_{z_2}(\cdot)>=<x,g(\cdot)>, $ a.e for all $y,z_1,z_2\in X.$

We claim that $g\in C(\re,X^*).$ If not, there is a point of discontinuity $t_0$ and null-sequence $\seq{t}{n}\subset \re,$ such that
$$
\inf_{n\in \za} \nrm{g(t_0+t_n)-g(t_0)}\ge \ep >0.
$$
For some $\seq{z}{n}\subset B_X$

\begin{eqnarray*}
\lefteqn{\nrm{g(t_0+t_n)-g(t_0)}-\frac{1}{n}=\btr{<g_{z_n}(t_0+t_n)-g_{z_n}(t_0),z_n>}} \\
&\le & \sup_{h\in B_{L^1(\mu)}}\int_{\Omega}\btr{<g(\om+t_n),z_n>-g(\om),z_n>}h(\om)d\omega \\
&\le& \sup_{z\in B_X,h\in B_{L^1(\mu)}}\int_{\Omega}\btr{<g(\om+h),z>-g(\om),z>}h(\om)d\omega \\
&\le &\sup_{h\in B_{L^1(\mu,X)}}\int_{\Omega}\btr{<g(\om+h),z>-g(\om),h(\om)>}d\omega \to 0,\\ 
\end{eqnarray*}
whereby the convergence comes with the definition of $g\in L^1(\re,X)^{\sun},$, which yields the contradiction.

\end{proof}

Next we present an application of this representation.
Using the definitions in Remark \ref{translation-semigroup-definition}
and applying Corollary \ref{dual-translation-semigroup} and \cite[Thm. 4.6.11, Lemma 4.6.13]{ArendtBatty}, we obtain
\begin{cor}
Assume $c_0\not\subset X^*,$ let  $f\in L^{\infty}_{w^*}(\re,X^*),$  and let
$\semig{V^*}{\re}\subset L(L^{\infty}_{w^*}(\re,X^*))$ be the dual
translation group. Furthermore, if $V^*(t)f-f\in AP(\re,X^*)$, or $V^*(t)R(\la,B)^*f-R(\la,B)^*f\in AP(\re,X^*)$ for all $t\in \re,$ then $R(\la,B)^*f\in AP(\re,X^*).$
\end{cor}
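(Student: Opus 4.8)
The plan is to reduce both hypotheses to a single statement about $g:=R(\la,B)^*f$ and then quote \cite[Thm. 4.6.11, Lemma 4.6.13]{ArendtBatty}. Two structural facts are needed first. Firstly, by Corollary \ref{dual-translation-semigroup} the sun-dual of $L^1(\re,X)$ is $BUC(\re,X^*)$, and since for any $C_0$-semigroup the resolvent of the adjoint generator maps $X^*$ into $D(B^*)\subset X^{\sun}$ (see \cite{NeervenLNM}), one has $g=R(\la,B)^*f\in \big(L^1(\re,X)\big)^{\sun}=BUC(\re,X^*)$. Secondly, $R(\la,B)^*$ commutes with every $V^*(t)$ and, restricted to $X^{\sun}=BUC(\re,X^*)$, coincides with the resolvent of the (strongly continuous) translation group on $BUC(\re,X^*)$; the latter is given by an absolutely convergent integral of translates and therefore maps $AP(\re,X^*)$ into $AP(\re,X^*)$.

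Consequently the first hypothesis implies the second: if $V^*(t)f-f\in AP(\re,X^*)$ for all $t$, then $V^*(t)g-g=R(\la,B)^*\big(V^*(t)f-f\big)\in AP(\re,X^*)$ for all $t$. So in either case it remains to prove the following: \emph{if $g\in BUC(\re,X^*)$, $c_0\not\subset X^*$, and $V^*(t)g-g\in AP(\re,X^*)$ for every $t\in\re$, then $g\in AP(\re,X^*)$}. This is exactly the situation covered by \cite[Thm. 4.6.11, Lemma 4.6.13]{ArendtBatty}, and the conclusion $R(\la,B)^*f\in AP(\re,X^*)$ follows.

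Unwinding that last step, its content is to extract from ``all first differences are almost periodic'' the two ingredients those results need, namely that the Carleman spectrum $\mathrm{sp}(g)$ is countable and that $g$ is totally ergodic, and then to apply the countable-spectrum theorem, which is where $c_0\not\subset X^*$ is used. Countability comes from fixing one $t_0\neq 0$ and using $\widehat{V^*(t_0)g-g}=(e^{i\xi t_0}-1)\widehat g$ to get $\mathrm{sp}(g)\subset \mathrm{sp}\big(V^*(t_0)g-g\big)\cup\tfrac{2\pi}{t_0}\zz\cup\{0\}$, the first set being countable because $V^*(t_0)g-g$ is almost periodic. Total ergodicity follows from the identity $\tfrac1T\int_0^T e^{-i\eta s}\big(g(s+t)-g(s)\big)\,ds=(e^{i\eta t}-1)\,\tfrac1T\int_0^Te^{-i\eta s}g(s)\,ds+o(1)$: the left-hand side converges since the difference is almost periodic, so the bounded net $\tfrac1T\int_0^Te^{-i\eta s}g(s)\,ds$ has a unique cluster point and hence converges.

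The main obstacle is precisely this matching with \cite{ArendtBatty}: ensuring that the uniform continuity their Tauberian statements demand is available, which is exactly what Corollary \ref{dual-translation-semigroup} supplies via $g\in BUC(\re,X^*)$, and that the spectral and ergodicity notions there agree with the ones above (together with the usual measurability bookkeeping when passing between $L^{\infty}_{w^*}(\re,X^*)$ and its uniformly continuous subspace). The remaining ingredients — that resolvents commute with the translation group, that the resolvent preserves almost periodicity, and the two short spectral computations — are routine.
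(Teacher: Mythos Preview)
Your proof is correct and follows essentially the same route as the paper: reduce to $g=R(\la,B)^*f\in BUC(\re,X^*)$ via the sun-dual identification of Corollary~\ref{dual-translation-semigroup}, pass from the first hypothesis to the second by commuting $R(\la,B)^*$ with $V^*(t)$ and noting that the resolvent preserves $AP(\re,X^*)$, and then invoke \cite[Thm.~4.6.11, Lemma~4.6.13]{ArendtBatty}. Your write-up is in fact more explicit than the paper's (which leaves the commutation/preservation step implicit), and your final paragraph unwinding the spectral and ergodicity content of the cited theorem is informative but not required.
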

\begin{proof}
Let $f\in L^{\infty}_{w^*}(\re,X^*)$ and $V^*(t)f-f\in AP(\re,X^*).$ Then,
$$
V^*(t)R(\la,B)^*f-R(\la,B)^*f=V^{\sun}R(\la,B^*)f-R(\la,B^*)f=\in AP(\re,X^*)
$$
As $R(\la,B^*)f\in D(A^*)\subset L^1(\re,X)^{\sun}=BUC(\re,X^*).$
Hence, an application of \cite[Thm. 4.6.1, p. 298]{ArendtBatty} yields $R(\la,B^*)f\in AP(\re,X).$
\end{proof}

In a similar way, the result of Kadets can be extended.
\begin{theo}
Assume that $c_0\not\subset X^*,$ and let $f\in L^{\infty}_{w^*}(\re,X^*),$ with $R(\la,B^*)f\in AP(\re,X^*).$ If for all $\la>0$, the integrals $\int_0^tR(\la,B^*)f(s)ds$ are bounded, then
$$
w^*-\int_0^tf(s)ds \in AP(\re,X^*).
$$
\end{theo}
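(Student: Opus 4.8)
First I would reduce the statement to an application of the extended Kadets theorem for $AP$-functions together with the $\sun$-duality machinery already established. Recall the classical result of Kadets: if $c_0 \not\subset Y$ and $f \in AP(\re,Y)$ has bounded antiderivative, then the antiderivative is itself in $AP(\re,Y)$. Here the difficulty is that $f$ lives only in $L^\infty_{w^*}(\re,X^*)$, so the integral must be interpreted in the $w^*$-sense, and we cannot apply Kadets directly to $f$. The hypothesis gives us access to $f$ only after smoothing by the resolvent $R(\la,B^*)$.

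The key steps, in order: (i) Fix $\la>0$. By Remark \ref{translation-semigroup-definition} and Corollary \ref{dual-translation-semigroup}, $R(\la,B^*)f = R(\la,B)^{\sun} f \in D(B^\sun) \subset L^1(\re,X)^\sun = BUC(\re,X^*)$, so $R(\la,B^*)f$ is a genuine $BUC$-function to which the vector-valued Kadets theorem applies. Since by hypothesis it lies in $AP(\re,X^*)$ and its antiderivative $t\mapsto \int_0^t R(\la,B^*)f(s)\,ds$ is bounded, Kadets (using $c_0 \not\subset X^*$) gives $\int_0^\cdot R(\la,B^*)f(s)\,ds \in AP(\re,X^*)$. (ii) Interchange the resolvent and the integral: $\int_0^t R(\la,B^*)f(s)\,ds = R(\la,B^*)\int_0^t f(s)\,ds$ in the $w^*$-sense, using that $R(\la,B^*)$ is $w^*$-$w^*$-continuous as a dual operator and commutes with translations. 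Hence $R(\la,B^*)\,g \in AP(\re,X^*)$ where $g(t) := w^*\text{-}\int_0^t f(s)\,ds$. (iii) Let $\la\to\infty$: since $\la R(\la,B^*)\to I$ in the appropriate sense on the range of the translation semigroup (or on $BUC$), and $AP(\re,X^*)$ is closed under the relevant limits, conclude $g \in AP(\re,X^*)$. Alternatively — and this is probably cleaner — observe that $g$ itself is bounded (it is the uniform limit, or $w^*$-limit, of the bounded $\la R(\la,B^*)g$) and that $g$ has resolvent-smoothings in $AP$; then apply the first Corollary of this section (the one proved just above) with $V^*(t)g - g$ in place of the hypothesis, since $V^*(t)g-g$ differs from an antiderivative-type expression that the previous argument already controls.

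The main obstacle I anticipate is step (iii): transferring almost periodicity from the family $\{R(\la,B^*)g\}_{\la>0}$ back to $g$ without a priori knowing $g \in BUC$. The previous Corollary in the excerpt handles exactly the pattern ``$c_0\not\subset X^*$, resolvent-smoothing is $AP$, conclude the function is $AP$'' — but it was applied to an $f$ for which $R(\la,B^*)f \in BUC(\re,X^*)$ automatically. For $g$ the boundedness of the antiderivative is the given hypothesis, and one must check that $g$ (the $w^*$-integral) actually lies in $L^\infty_{w^*}(\re,X^*)$ with the right continuity to feed into that Corollary; this is where the representation of $L^1(\re,X)^*$ and the identity $D(B^\sun)\subset BUC(\re,X^*)$ do the real work. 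Once $g$ is known to be a legitimate element on which $V^*(\cdot)$ acts and whose resolvent-regularizations are almost periodic, the cited Corollary closes the argument, and the requirement $c_0\not\subset X^*$ is used precisely there, exactly as in Kadets' theorem.
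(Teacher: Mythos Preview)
Your overall architecture --- apply Kadets to $R(\la,B^*)f$ to get its antiderivative in $AP(\re,X^*)$, then let $\la\to\infty$ to recover $g(t):=w^*\text{-}\int_0^t f$ --- is exactly the paper's strategy. But two points need correction, and the second is a genuine gap.

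\textbf{Step (ii).} The commutation $\int_0^t R(\la,B^*)f(s)\,ds = R(\la,B^*)g(t)$ is false as stated. A Fubini computation gives
\[
\int_0^t (R(\la,B^*)f)(s)\,ds=\int_0^\infty e^{-\la r}\int_r^{t+r} f(u)\,du\,dr=\int_0^\infty e^{-\la r}\bigl[g(t+r)-g(r)\bigr]\,dr,
\]
which differs from $\int_0^\infty e^{-\la r}g(t+r)\,dr$ by the $t$-independent constant $\int_0^\infty e^{-\la r}g(r)\,dr$. This is harmless for the conclusion (constants are $AP$), but note also that writing $R(\la,B^*)g$ already presupposes $g\in L^\infty_{w^*}$, i.e.\ that $g$ is bounded --- precisely what is not yet known.

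\textbf{Step (iii).} This is where the actual content lies, and neither of your proposed routes closes it. Invoking ``$\la R(\la,B^*)\to I$ on $BUC$'' is circular without first knowing $g\in BUC$. Your alternative via the preceding Corollary goes in the wrong direction: that Corollary takes the hypothesis $V^*(t)h-h\in AP$ and returns $R(\la,B^*)h\in AP$, which for $h=g$ is what you already have, not $g\in AP$.

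The paper's missing ingredient is simple but decisive: $g$ is Lipschitz with constant $\|f\|_\infty$, since $g(t_1)-g(t_2)=w^*\text{-}\int_{t_2}^{t_1}f$. Equivalently, the family $\{s\mapsto w^*\text{-}\int_s^{s+t} f(r)\,dr\}_{t\in\re}$ is equi-Lipschitz in $s$. This alone forces the Abel means
\[
\la\int_0^t R(\la,B^*)f(s)\,ds=\int_0^\infty \la e^{-\la r}\Bigl(w^*\text{-}\!\int_r^{t+r} f(u)\,du\Bigr)\,dr
\]
to converge \emph{uniformly in $t$} to $w^*\text{-}\int_0^t f$ as $\la\to\infty$, with no a priori boundedness of $g$ needed. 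Since the left-hand side is in $AP(\re,X^*)$ for every $\la>0$ by step~(i), and $AP$ is closed under uniform limits, the conclusion follows. Once you insert this Lipschitz/equi-Lipschitz observation in place of your step (iii), your plan is the paper's proof.
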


\begin{proof}
$f\in L^{\infty}_{w^*}(\re,X^*).$ Then, $R(\la,B^*)f\in BUC(\re,X^*)$, and by \cite[Lemma 4.6.13, p. 300]{ArendtBatty},
we have $\int_0^tR(\la,B^*)f(s)ds\in AP(\re,X^*)$ for all $\la>0.$  Then,
\begin{eqnarray*}
\lefteqn{<x,\int_0^tR(\la,B)f(r)dr>}\\
&=&\int_0^t\int_0^{\infty}e^{-\la s}<x,f(s+r)>dtdr =\int_0^{\infty}e^{-\la s}\int_0^t<x,f(s+r)>drds\\
&=&\int_0^{\infty}e^{-\la s}\int_s^{s+t}<x,f(r)>drds.
\end{eqnarray*}
As $\bk{s\mapsto w^*-\int_s^{s+t}f(r)dr=\int_0^tV^*(r)fdr(s)} \in BUC(\re,X^*)$, and $V^*(t)$ $w^*-w^*-$continuous, we have for the $w^*-$integrals
\begin{eqnarray*}
\lefteqn{\int_s^{s+t}f(r)dr-\int_r^{r+t}f(r)dr=\int_s^{r}f(r)dr-\int_t^{r+t}f(r)dr} \\
&=&\int_s^{r}f(r)dr-\int_0^{r}f_t(r)dr=\delta_0\fk{(I-V^*(t))(\int_s^{r}f(\cdot+r)dr},
\end{eqnarray*}
and 
$$
\nrm{\delta_0\fk{(I-V^*(t))(\int_s^{r}f(\cdot+r)dr}}\le \nrm{I-V^*(t)}\nri{\nrm{f}}\btr{s-t}
$$
Consequently, $\bk{w^*-\int_s^{s+t}f(r)dr}_{t\in\re}$ is an equi-Lipschitz family, and partial integration gives
$$
AP(\re,X^*)\ni \int_0^{(\cdot)}\la R(\la,B)f(r)dr= \int_0^{\infty}\la e^{-\la s}\int_s^{s+(\cdot)}f(r)drds \to w^*-\int_0^{(\cdot)}f(s)ds,
$$
uniformly for $\la\to \infty,$ which concludes the proof.
\end{proof}

As an application, so-called evolution semigroups are considered. For
a detailed discussion, see \cite{ChiconeLatushkin}.

\begin{theo}
Let $\semig{S}{\rep}\subset L(X)$ be a bounded $C_0-$semigroup and
$$
\Funk{T(t)}{L^1(\re,X)}{L^1(\re,X)}{f}{\bk{s\mapsto S(t)f(t-s)}}
$$
If $\semig{T}{\rep}$ is Eberlein weakly almost periodic (i.e., $\bk{T(t)f}_{t\ge 0}$ is weakly relatively compact for all $f\in L^1(\re,X).$), then $\semig{T}{\rep}$ is strongly stable (i.e., $\ilm{t}T(t)f=0$ for all $f\in L^1(\re,X).$
\end{theo}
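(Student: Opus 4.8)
The plan is to combine the weak-compactness criterion of Theorem~\ref{sigma-finite-weak-compactness} with an asymptotic-averaging argument in the spirit of the Eberlein decomposition. First I would record the elementary structural fact about this particular evolution semigroup: writing $(T(t)f)(s)=S(t)f(s-t)$ (the formula in the statement should read $f(s-t)$, consistent with Remark~\ref{translation-semigroup-definition}), one checks $T(t+\tau)=T(t)T(\tau)$ and that $\{T(t)\}_{t\ge 0}$ is a bounded $C_0$-semigroup on $L^1(\re,X)$ with bound $M=\sup_t\|S(t)\|$. The key observation is that $T(t)$ translates mass to the right by $t$: for any $f\in L^1(\re,X)$ and any $R>0$,
\[
\int_{-\infty}^{R}\|(T(t)f)(s)\|\,ds=\int_{-\infty}^{R-t}\|S(t)f(s)\|\,ds\le M\int_{-\infty}^{R-t}\|f(s)\|\,ds\xrightarrow[t\to\infty]{}0 .
\]
So the orbit $\{T(t)f\}_{t\ge 0}$ "escapes to $+\infty$" in the sense that for fixed compact pieces of $\re$ the $L^1$-mass there tends to $0$.

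Next I would use Eberlein weak almost periodicity. Since $\{T(t)f\}_{t\ge 0}$ is weakly relatively compact, the semigroup is (W)AP, and by the classical Jacobs--de Leeuw--Glicksberg / Eberlein splitting (or directly: a bounded $C_0$-semigroup with relatively weakly compact orbits has a mean ergodic projection $P$ onto $\mathrm{Fix}(T)=\bigcap_t\ker(I-T(t))$, and $L^1(\re,X)=PL^1\oplus \overline{\mathrm{span}}\bigcup_t\mathrm{ran}(I-T(t))$, with the flow being almost periodic on the first summand and "flight" on the second). The heart of the matter is then to show $\mathrm{Fix}(T)=\{0\}$: if $T(t)g=g$ for all $t\ge 0$, then $\|g\|_{L^1}=\|T(t)g\|_{L^1}$ but also, by the mass-escape estimate applied to $g$ itself, $\int_{-\infty}^{R}\|g(s-t)\|ds=\int_{-\infty}^{R}\|(T(t)g)(s)\|ds=\int_{-\infty}^{R}\|g(s)\|ds\to 0$ as $t\to\infty$ for every $R$, forcing $\int_{-\infty}^{R}\|g\|ds=0$ for all $R$, i.e. $g=0$. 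Hence $P=0$, and every orbit lies in the flight subspace.

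It then remains to upgrade "$f$ in the flight subspace $\Rightarrow$ $T(t)f\to 0$ weakly, along a subnet" to genuine norm convergence $\|T(t)f\|_{L^1}\to 0$. Here I would invoke Theorem~\ref{sigma-finite-weak-compactness}: relative weak compactness of $\{T(t)f:t\ge0\}$ gives, in particular, $\mathcal{A}$-equi-integrability of the orbit, which for $\mathcal{A}=\{[-n,n]\}_n$ means there is $(a_n)\in l^1(\za)^+$ with $\int_{[-n,n]\setminus[-(n-1),n-1]}\|(T(t)f)(s)\|\,ds\le a_n$ uniformly in $t$; combined with Remark~\ref{A-equi-integrable-equiv} this yields a uniform-in-$t$ tail bound $\int_{|s|>n}\|(T(t)f)(s)\|\,ds\le \sum_{k>n}a_k\to 0$. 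Now split $\|T(t)f\|_{L^1}=\int_{|s|\le R}\|(T(t)f)(s)\|ds+\int_{|s|>R}\|(T(t)f)(s)\|ds$: the second term is $<\ep$ uniformly in $t$ for $R$ large by the tail bound, and the first term tends to $0$ as $t\to\infty$ by the mass-escape estimate above. Therefore $\limsup_{t\to\infty}\|T(t)f\|_{L^1}\le\ep$ for every $\ep$, i.e. $\lim_{t\to\infty}T(t)f=0$ in $L^1(\re,X)$, which is the asserted strong stability.

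The main obstacle is the last step: a priori weak relative compactness only gives weak convergence of the orbit along subnets, not norm convergence, and an arbitrary bounded $C_0$-semigroup need not be strongly stable just because it is (W)AP. What makes it work here is the rigid geometry of the translation--evolution semigroup — mass is pushed monotonically to $+\infty$ and never returns — so that the uniform tail control extracted from Theorem~\ref{sigma-finite-weak-compactness} is exactly the extra ingredient that converts weak to strong. I would therefore take care to state the mass-escape estimate and the uniform-tail consequence of $\mathcal{A}$-equi-integrability as two clean lemmas before assembling them, since the rest is the routine $\ep$-splitting indicated above.
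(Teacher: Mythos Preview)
Your argument is correct, and the decisive mechanism---extracting the uniform tail bound $\sup_{t\ge 0}\int_{|s|>n}\|(T(t)f)(s)\|\,ds\to 0$ from Theorem~\ref{sigma-finite-weak-compactness} via Remark~\ref{A-equi-integrable-equiv}, and pairing it with the mass-escape estimate $\int_{-\infty}^{R}\|(T(t)f)(s)\|\,ds\to 0$---is exactly the paper's idea. However, your Jacobs--de~Leeuw--Glicksberg paragraph (showing $\mathrm{Fix}(T)=\{0\}$ and locating $f$ in the flight subspace) is entirely superfluous: your final $\ep$-splitting already proves $\|T(t)f\|_{L^1}\to 0$ for \emph{every} $f\in L^1(\re,X)$ directly, without ever using that $f$ lies in the flight part or that $P=0$. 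You can delete that step and the proof still stands.

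The paper organizes the same ingredients differently and obtains a slightly sharper intermediate conclusion. Instead of working with a general $f$, it tests with $f=\chi_{[-1,1]}(\cdot)\,x$ for a fixed $x\in X$, applies the uniform tail bound to this particular $f$, and observes that once $t>n+1$ the tail integral $\int_{|s|>n}\|S(t)\chi_{[-1,1]}(s-t)x\|\,ds$ equals $2\|S(t)x\|$; the uniform-in-$t$ smallness then forces $\|S(t)x\|\to 0$ for every $x$, i.e.\ the underlying semigroup $\{S(t)\}_{t\ge 0}$ is itself strongly stable on $X$. Strong stability of $\{T(t)\}_{t\ge 0}$ on $L^1(\re,X)$ then follows on simple functions and, by boundedness, on all of $L^1(\re,X)$. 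Your route bypasses this and establishes stability of $\{T(t)\}$ directly on arbitrary $f$; this is cleaner in one sense, but it does not isolate the (arguably more informative) fact that $S(t)\to 0$ strongly on $X$.
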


\begin{proof}
Consider the simple function $f(t):= \chi_{[-1,1]}(t)x$; then, by the
assumption that $\bk{T(t)f}_{t\ge 0}$ is weakly relatively compact, and the equi-integrability given by Remark \ref{A-equi-integrable-equiv},
$$
\int_{\btr{s}>n}\nrm{S(t)\chi_{[-1,1]}(t-s)x}ds \to 0 \mbox{ uniformly for } t\ge 0.
$$
The identity
\begin{eqnarray*}
\int_{\btr{s}>n}\nrm{S(t)\chi_{[-1,1]}(t-s)x}ds &=&  \nrm{S(t)x} \int_{\btr{t-[-1,1]} \ge n} 1 ds\\
\end{eqnarray*}
and using the uniform in $t\in \re,$, we have $\nrm{S(t)x}\to 0$ Hence, $\semig{S}{\rep}$ becomes strongly stable and therefore $\semig{T}{\rep}$ on the simple function and by its boundedness on all of $L^1(\re,X).$
\end{proof}

\section{Proofs}

\begin{proof}[Proof of Thm. \ref{L-1-X-dual}]
For the case of $p=1$, we present a proof based on \cite[Thm 5.1]{Rieffel}.
For $\vp\in L^1(\Omega,X)^*$, we consider
$$
\Funk{G}{\Sigma}{X^*}{E}{\bk{x\mapsto \vp(x\chi_E)}.}
$$
Since
\begin{eqnarray}\label{bounded}
\nrm{G(E)(x)}=\nrm{\vp(x\chi_E)}&\le& \nrm{\vp}\nrm{x\chi_E}_1\le \nrm{\vp}\nrm{x}\mu(E).
\end{eqnarray}
It follows that $G$ has its values in $X^*$ and is countably additive. To verify the bounded variation, let $\bk{E_i}_{i=1}^n$ be a partition and $\bk{x_i}_{i=1}^n\subset B_X.$ Then,
\begin{eqnarray*}  
\btr{\sum_{i=1}^nG(E_i)(x_i)}&=&\btr{\vp\fk{\sum_{i=1}^n\chi_{E_i}x_i}}\le \nrm{\vp}\nrm{\sum_{i=1}^n\chi_{E_i}x_i}_1 \nonumber \\
&=&\nrm{\vp}\int_{\Omega}\nrm{\sum_{i=1}^n\chi_{E_i}(\om)x_i} d\mu(\om) \le \nrm{\vp}\sum_{i=1}^n\nrm{x_i}\mu(E_i) \nonumber \\
&\le&\nrm{\vp}\mu(\Omega).
\end{eqnarray*}

Following \cite[Thm. 5.1]{Rieffel} with $V=(X^*,w^*),$, it remains to verify in his notation
$$
A_{\Omega}(G)=\bk{\frac{G(F)}{\mu(F)}: F\subset \Omega, \mu(F)>0} \mbox{ is bounded.} $$
By (\ref{bounded}), we have
$
A_{\Omega}(G)\subset \nrm{\vp} B_{X^*}
$
Hence, we find by \cite[Thm. 5.1]{Rieffel} a $w^*-$integrable function $g,$ such that $G(E)=w^*-\int_Egd\mu,$ for
all $E\in \Sigma,$, and the range of $g$ is a subset of $ A_{\Omega}(G)\subset \nrm{\vp} B_{X^*},$, which yields $\nri{g}\le\nrm{\vp}.$
Plainly, $\nrm{g(\cdot)} \in L^{\infty}(\Omega)$, and if
$f\in L^1(\Omega,X)$ is a simple function, then
$\vp(f)=\int_{\Omega}<g,f>_{X^*,X}d\mu.$ Let $\seq{f}{n}$ be a
sequence of simple functions with the limit $f.$ Then, for a subsequence $<g(\om),f(\om)>=\ilm{n}<g(\om),f_{n_k}(\om)> $, the limit is measurable,
$$
\btr{\int_{\Omega}<g,f_{n_k}>d\mu -\int_{\Omega}<g,f>d\mu}\le \nrm{\vp} \int_{\Omega}\nrm{f_n-f}d\mu,
$$
and
$$
\vp(f)=\ilm{k}\vp(f_n)=\ilm{k}\int_{\Omega}<g,f_{n}>d\mu =\int_{\Omega}<g,f>d\mu,
$$
concludes the proof.
\end{proof}

\begin{proof}[Proof of Corollary \ref{sigma-finite-duality}]
Let $\vp:L^1(\Omega,X)\to \ce$ be a linear functional and $\bigcup A_n=\Omega$ be a $\sigma-$finite cover, with
$A_n\subset A_{n+1}$ Then, we find by the previous result, for the restrictions of
$$
\vp_{|A_n\backslash A_{n-1}}=\tilde{\vp_n}:L^1(A_n\backslash A_{n-1},X)\to \ce,
$$
a sequence $\seq{\tilde{g}}{n}: A_n\backslash A_{n-1} \to
\nrm{\vp}B_{X^*},$ such that
$\tilde{\vp}_n (f)= \int_{A_n\backslash A_{n-1}}<\tilde{g}_n(\om),f(\om)>d\mu(\om),$ for all $f\in L^1(A_n,X).$ Note that with $A_{-1}=\emptyset,$
$$
\vp(f\chi_{A_{n}})=\sum_{k=1}^n\vp(f\chi_{A_k\backslash A_{k-1}})=\sum_{k=1}^n\tilde{\vp}(f\chi_{A_k\backslash A_{k-1}})
$$
With this observation, we define
$$
g_n(\om):=\left\{\begin{array}{lcr} \tilde{g}_k(\om)&:& \om \in A_n\backslash A_{n-1}  \\ 0 &:& otherwise,  \end{array} \right.
$$
and the function
$$
g(\om):=\sum_{k=1}^{\infty}g_k(\om).
$$
Due to the disjoint supports of $g_k$, the sum is well defined and converges pointwise $w^*$ to $g,$ within the range of $g$ contained in $\nrm{\vp}B_{X^*}.$ Therefore, $g\in L^{\infty}_{w^*}(\Omega,X^*),$ and
$$\int_{\Omega}\sum_{k=1}^{n}<g_k,f>d\mu=\vp(f\chi_{A_{n}}).$$

For $f\in L^1(\Omega,X)$, we have
\begin{eqnarray*}
\lefteqn{\btr{\vp(f)-\int_{\Omega}<g,f>d\mu}} \\
&\le& \btr{\vp(f)-\sum_{n=1}^{k}\vp_n(f)}+\btr{\sum_{n=1}^{k}\vp_n(f)-\int_{\om}\sum_{n=1}^{k}<g_n,f>d\mu}\\
&&+\btr{\int_{\Omega}\sum_{n=1}^{k}<g_n-g,f>d\mu},\\
&\le& \btr{\vp(f\chi_{\Omega\backslash A_n})}+\int_{\Omega\backslash A_n}\nrm{f}d\mu \le C \int_{\Omega\backslash A_n}\nrm{f}d\mu.
\end{eqnarray*}
for some $C>0,$ which proves the representation. For $\nrm{\vp}=\nri{\nrm{g(\cdot)}},$, apply the Hölder inequality, which concludes the proof.

\end{proof}

\begin{proof}[Proof of Thm. \ref{sigma-finite-weak-compactness} ]
For sufficiency up to the $\mathcal{A}-$equi-integrability estimation, we may precisely follow the proof of \cite{RuessL1}.
We provide the proof (3) that implies (1). Following Eberlein,
Smul'yan, and Grothendieck, we have to show that given $\seq{f}{m}\subset H$ and $\seq{\vp}{n}\subset B_{L^1(\mu,X)^*},$, we have
$$
\al:=\ilm{n}\ilm{m}\vp_n(f_m)=\ilm{m}\ilm{n}\vp_n(f_m)=:\beta,
$$
provided that the iterated limits exit \cite[Cor. 1 of Thm 7]{Groth}. Given such sequences $\seq{f}{m}$ and $\seq{\vp}{n},$ let $\seq{g}{m}$ be the sequence associated with $\seq{f}{m}$ according to (3), $ E\in \Sigma, \mu(E)=0,$ the exceptional subset of $\Omega.$ Define
$$
g(\om):=\left\{\begin{array}{rcl}
w-\ilm{n}g_n(\om)&:&\om\in \Omega\backslash E \\
0&:&\mbox{ otherwise. }
\end{array}\right.
$$
Clearly, $g$ is essentially separably valued and weakly measurable and
hence strongly measurable. Moreover, by its very definition and using Fatou's Lemma and boundedness of $H,$
$$
\int\nrm{g}d\mu \le \int \liminf \nrm{g_n}d\mu\le \liminf \int \nrm{g_n}d\mu <\infty,
$$
so that $g\in L^1(\mu,X).$ Let us show that the sequence $\seq{g}{n}$ converges weakly in $L^1(\mu,X)$ to $g.$
First, note that we can assume $L^1(\mu)$ to be separable. Then, according to Thm. \ref{L-1-X-dual},
the continuous linear functionals $\tilde{h}$ on $L^1(\mu,X)$ are represented by $w^*-$measurable functions $h:\Omega
\to X^*$ such that $\nrm{h(\cdot)}\in L^{\infty}(\mu),$ the pairing being given by

$<\tilde{h},f>=\int<h,f>d\mu, \ f \in L^1(\mu,X).$ Given any such $h,$ uniform integrability of the sequence
$\bk{<h,g_n>}_{n\in\za}$ and the fact that
$<h(\cdot),g_n(\cdot)>\to <h(\cdot),g(\cdot)>$ a.e. $\Omega,$ in
conjunction with Vitali's convergence theorem imply that for every $A_n$
\begin{equation} \label{Vitali-appl}
\int_{A_n}\btr{<h(s),g_n(s)>\to <h(s),g(s)>}d\mu(s)\to 0.
\end{equation}
Thus, for given

\begin{eqnarray*}
\lefteqn{\int_S\btr{<h(s),g_n(s)>- <h(s),g(s)>}d\mu(s) } \\
&\le& \int_{A_n}\btr{<h(s),g_n(s)> - <h(s),g(s)>}d\mu(s)+\int_{S\backslash A_n}\btr{<h(s),g_n(s)> -<h(s),g(s)>}d\mu(s) \\
&\le& \int_{A_n} \btr{<h(s),g_n(s)>- <h(s),g(s)>}d\mu(s)+C \bk{\int_{S\backslash A_n}\nrm{g_n}d\mu+ \int_{S\backslash A_n} \nrm{g}d\mu}.
\end{eqnarray*}
Again Fatou's lemma yields
$$
\int_{S\backslash A_n} \nrm{g}d\mu \le \liminf \int_{S\backslash A_n}\nrm{g_n}d\mu,
$$
and we find an $n\in\za,$ such that
$$
\bk{\int_{S\backslash A_n}\nrm{g_n}d\mu+ \int_{S\backslash A_n} \nrm{g}d\mu} \le \frac{\ep}{2} \mbox{ uniformly for } n\in \za.
$$
An application of $(\ref{Vitali-appl})$ leads to weak convergence in $L^1(S,\mu,X).$
To complete the implication, note that if $h_m=\sum_1^r\al_if_{k_i}$ is any convex combination of $\seq{f}{m}$ and $\vp \in L^1(S,\mu, X)$ such that $\gamma:=\ilm{m}\vp(f_m)$ exists, then
$$
\btr{\gamma-\vp(h_m)}=\btr{\sum_1^r\al_i(\gamma-\vp(f_{k_i})}\le \max\bk{\btr{\gamma.-\vp(f_{k_i})}:1\le i\le r}.
$$
This shows that $\al=\vp(g)=\beta,$, where $\vp$ is a $w^*-$cluster
point of $\seq{\vp}{n},$, thus completing that proof (3) implies (1).

Implication (1) implies (3); note that uniform integrability comes with \cite[p. 177]{DincBrooks}. It remains to verify (2) of the definition \ref{A-equi-integrable}.
Assuming the contrary to part 2 of the definition of $\mathcal{A}-$equi-integrable, for all positive $\seq{a}{n}\in l^1(\za)$, we find an $f$ such that
$$\int_{A_n\backslash A_{n-1}}\nrm{f}d\mu \ge a_n.$$
Choosing $a^n_n=\ep$ and $a^n_k=0,$ for $k\not= n\in \za,$, we find an $f_n\in A,$ such that
$$\int_{A_n\backslash A_{n-1}}\nrm{f}d\mu \ge a_n=\ep.$$

Then using Hahn-Banach, we find $\psi_k\in L^1(A_k\backslash A_{K-1},X)^*$ with norm 1,
$$
a_k\le \int_{A_n\backslash A_{n-1}}\nrm{\tilde{f}_k}d\mu= <\psi_k,f_k>
$$

By Theorem \ref{L-1-X-dual}, we find $g_k\in L^{\infty}_{w^*}(A_k\backslash A_{K-1},X^*),$ such that $\nri{g_k}\le 1,$ and
$$
a_k\le \int_{A_n\backslash A_{n-1}}\nrm{\tilde{f}_k}d\mu= <\psi_k,f_k>=\int_{A_k\backslash A_{k-1}}<g_k,f_k>d\mu.
$$

Now, we consider the mapping
$$
\Funk{T}{L^1(S,\mu,X)}{l^1(\za)}{f}{\bk{\int_{A_k\backslash A_{k-1}}<g_k,f>d\mu}}_{k\in \za}
$$
and
$$
T(f_n)=\bk{\int_{A_k\backslash A_{k-1}}<g_k,f_n>d\mu}_{k\in \za}
$$
which fails to be uniformly summable for $\ep_1=\ep/2.$ However, due to the weak conditional compactness of $\seq{f}{n},$, the sequence $\bk{Tf_n}_{n\in \za}$ is conditionally compact, a contradiction to \cite[[IV.13.3]{DS}.
\end{proof}

\end{document}